\theoremstyle{plain} 
\newtheorem{theorem}{Theorem}[section]
\newtheorem{corollary}[theorem]{Corollary}
\newtheorem{lemma}[theorem]{Lemma}
\newtheorem{proposition}[theorem]{Proposition}
\theoremstyle{definition} 
\theoremstyle{definition} 
\theoremstyle{remark} 
\theoremstyle{remark} 
\newtheorem{remark}[theorem]{Remark}
\newtheorem*{remark*}{Remark}
\numberwithin{equation}{section}
  \renewcommand\section{\@startsection {section}{1}{\z@}%
                                   {-\bigskipamount}%
                                   {\medskipamount}%
                                   {\large\bfseries
                                   \raggedright}}
  \renewcommand\subsection{\@startsection {subsection}{2}{\z@}%
                                   {-\medskipamount}%
                                   {\smallskipamount}%
                                   {\bfseries
                                   \raggedright}}
\renewcommand{\bar}{\overline}
\newcommand{\ffrown}{\text{\raisebox{3pt}[0pt][0pt]{$\frown$}}}
\renewcommand{\O}{\underset{\ffrown}{<}}
\newcommand{\OG}{\underset{\ffrown}{>}}
\newcommand{\ssim}{\text{\raisebox{2.5pt}[0pt][0pt]{$\sim$}}}
\newcommand{\lsim}{\underset{\ssim}{<}}
\newcommand{\gsim}{\underset{\ssim}{>}}
\renewcommand{\gg}{>\kern-2pt>}
\renewcommand{\ll}{<\kern-2pt<}
\newcommand{\dd}{\partial}
\renewcommand{\dd}{\operatorname{d}\!}
\renewcommand{\le}{\leqslant}
\renewcommand{\ge}{\geqslant}
\newcommand{\de}{\delta}
\newcommand{\vpi}{\varphi}
\newcommand{\LD}{\mathcal{L}\!\mathcal{D}}
\renewcommand{\LD}{\mathcal{L}{\kern -1.9pt}\mathcal{D}}
\renewcommand{\LD}{\mathcal{D}}
\renewcommand{\LD}{\mathcal{L}{\kern -4pt}\mathcal{C}}
\renewcommand{\LD}{\mathcal{R}{\kern -3pt}\mathcal{C}}
\newcommand{\ii}[1]{\mathrm{I}\!\left\{#1\right\}}
\newcommand{\R}{{\mathbb{R}}}
\newcommand{\C}{\mathbb{C}}
\newcommand{\G}[1]{\mathcal{G}^{#1}_+}
\renewcommand{\G}{\Psi}
\newcommand{\tr}{{\tilde{r}}}
\newcommand{\vp}{\varepsilon}
\begin{document}


\title{Exact bounds on the inverse Mills ratio and its derivatives}


\author{Iosif Pinelis}

\address{Department of Mathematical Sciences\\
Michigan Technological University\\
Houghton, Michigan 49931, USA\\
E-mail: ipinelis@mtu.edu}

\keywords{
Inverse Mills ratio, 
complementary error function, 
functions of a complex variable, 
rate of growth of functions, 
inequalities for integrals, monotonic functions, entire and meromorphic functions, boundary behavior, inequalities in approximation, best constants, approximations to distributions}

\subjclass[2010]{Primary 30A10, 30D40, 	33B20, 26D15; secondary 26A48, 41A17, 41A44, 60E15, 62E17}


\begin{abstract}
The inverse Mills ratio is $R:=\vpi/\G$, where $\vpi$ and $\G$ are, respectively, the probability density function and the tail function of the standard normal distribution.  
Exact bounds on $R(z)$ for complex $z$ with $\Re z\ge0$ are obtained, which then yield logarithmically exact bounds on high-order derivatives of $R$. The main idea of the proof is a non-asymptotic version of the so-called stationary-phase method. 
\end{abstract}

\maketitle

\tableofcontents

\section{Introduction, summary, and discussion}\label{intro}




The inverse Mills ratio $R$ is defined by the formula 
\begin{equation}\label{eq:R}
	R:=\frac{\vpi}{\G}, 
\end{equation}
where $\vpi$ and $\G$ are, respectively, the probability density function and the tail function of the standard normal distribution, so that $\vpi(x)=\frac1{\sqrt{2\pi}}\,e^{-x^2/2}$ and $\G(x)=\int_x^\infty\vpi(u)\dd u$ 
for all real $x$. 
These expressions for $\vpi$ and $\G$ in fact define entire functions on the complex plane $\C$, if the upper limit of the integral is still understood as the point $\infty=\infty+0i$ on the extended real axis. 
One may note that $\G$ is a rescaled version of the complementary error function: $\G(z)=\operatorname{erfc}(z/\sqrt2\,)/2$ for all $z\in\C$.     

\begin{theorem}\label{th:}\ 
\begin{enumerate}[(I)]
	\item \label{G ne 0} The function $\G$ has no zeros on the right half-plane 
\begin{equation}
	H_+:=
\{z\in\C\colon\Re z\ge0\}. 
\end{equation}
So, the function $S\colon H_+\to\C$ defined   
by the formula 
\begin{equation}
	S(z):=\frac{R(z)}{z+\sqrt{2/\pi}}  
\end{equation}
for $z\in H_+$ is holomorphic on the interior of $H_+$. 
	\item \label{ineq} One has 
\begin{equation}\label{eq:ineq}
	1>|S(z)|>|S(iy_*)|=0.686\ldots\quad\text{for all }z\in H_+\setminus\{0,iy_*,-iy_*\}, 
\end{equation}
where $y_*=1.6267\ldots$ is the only minimizer of $|S(iy)|$ in $y\in(0,\infty)$. 
	\item \label{exact} It obviously follows that the lower bound $|S(iy_*)|$ on $|S(z)|$ in \eqref{eq:ineq} is exact. The upper bound $1$ on $|S(z)|$ in \eqref{eq:ineq} is also exact, in following rather strong sense: 	
\begin{equation}\label{eq:lim} 
	S(0)=1=\lim_{H_+\ni z\to\infty}S(z). 
\end{equation}
\end{enumerate}
\end{theorem}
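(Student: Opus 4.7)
\emph{Proof plan.} My approach rests on the integral representation
\[
\G(z) = \vpi(z)\int_0^\infty e^{-zt - t^2/2}\dd t =: \vpi(z)\,J(z), \qquad \Re z \ge 0,
\]
obtained by substituting $u = z+t$ in the definition of $\G$, together with a Phragm\'en--Lindel\"of argument applied to the holomorphic functions $S$ and $1/S$ on $H_+$.

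For part (I), since $\vpi$ is entire and zero-free, it suffices to show $J\ne 0$ on $H_+$. The standard Fourier cosine and sine transforms of $e^{-t^2/2}$ give
\[
J(iy) = e^{-y^2/2}\bigl[\sqrt{\pi/2} - iD(y)\bigr], \qquad D(y) := \int_0^y e^{u^2/2}\dd u,
\]
so $\Re J(iy) > 0$ and $J$ has no zeros on $\partial H_+$. For the interior I would apply the argument principle to $J$ along the boundary of the half-disk $\{z \in H_+: |z| < R\}$. Using the formula above together with the classical asymptotic $J(z) \sim 1/z$ as $|z| \to \infty$ in $|\arg z| < 3\pi/4$, one sees that the changes of $\arg J$ along the imaginary segment and along the circular arc cancel in the limit $R \to \infty$, so no zeros are enclosed. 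Combined with $\vpi$ being zero-free, this also shows that $S$ is itself zero-free on $H_+$, so $1/S$ is entire.

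Part (III) then follows directly: $S(0) = R(0)/\sqrt{2/\pi} = 1$ from $\vpi(0) = 1/\sqrt{2\pi}$ and $\G(0) = 1/2$, and the standard Mills-ratio expansion $R(z) = z + O(|z|^{-1})$ as $H_+ \ni z \to \infty$ forces $S(z) \to 1$ at infinity. In particular both $S$ and $1/S$ are bounded on $H_+$, so Phragm\'en--Lindel\"of reduces part (II) to the study of $|S|$ on $i\R$. Inserting the formula for $J(iy)$ into $|S(iy)|^{-2} = |J(iy)|^2 (y^2 + 2/\pi)$ gives
\[
|S(iy)|^{-2} = e^{-y^2}\bigl[\tfrac{\pi}{2} + D(y)^2\bigr]\bigl[y^2 + \tfrac{2}{\pi}\bigr],
\]
and the remaining task is to prove this exceeds $1$ for every $y \ne 0$ and to identify its unique minimizer on $(0,\infty)$ as $y_* = 1.6267\ldots$, with the stated value $|S(iy_*)| = 0.686\ldots$.

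The hard step is this sharp one-variable inequality. Although the right-hand side is completely explicit, it is sharp at both $y = 0$ and $y \to \infty$, and the nonlinear dependence on the Dawson-type integral $D$ makes a direct calculus approach look painful. The abstract's reference to a ``non-asymptotic stationary-phase method'' suggests that the author does not treat $i\R$ in isolation but rather deforms the contour in the definition of $J(z)$ to extract sign-definite remainders uniformly across $H_+$, so that the bound $|S|<1$ and the location of $y_*$ emerge from a single complex-analytic argument; I would expect the bulk of the paper's technical content to live in that deformation.
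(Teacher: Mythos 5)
Your overall architecture matches the paper's: establish that $\G$ is zero-free on $H_+$, that $S(0)=1=\lim_{H_+\ni z\to\infty}S(z)$, and the behaviour of $|S(iy)|$ on the imaginary axis, then transfer everything to the interior via the maximum-modulus principle applied to $S$ and $1/S$ (your Phragm\'en--Lindel\"of variant is equivalent here, since $S\to1$ at infinity). Your route to part (I) is genuinely different and does work: you use the Laplace-type representation $J(z)=\int_0^\infty e^{-zt-t^2/2}\,\dd t$ of the Mills ratio, get $\Re J>0$ on $i\R$ from the Fourier cosine transform, and run the argument principle on large half-disks with the classical asymptotic $J(z)\sim 1/z$; the arc contributes $-\pi$ and the segment $+\pi$ to the change of argument, so the winding number vanishes. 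The paper instead integrates $\vpi$ from $z$ to $+\infty$ along the hyperbola $uv=xy$, obtaining $\G(z)/\vpi(z)=A-iB$ with $A>0$ and $B\ge0$ in explicit integral form; that single device (the ``non-asymptotic stationary-phase method'') yields both the zero-freeness and, after further estimates, the asymptotic $R(z)\sim z$ that you import from the literature.

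The genuine gap is in part (II). You correctly reduce it to the one-variable statement that $e^{-y^2}\bigl(\tfrac{\pi}{2}+D(y)^2\bigr)\bigl(y^2+\tfrac{2}{\pi}\bigr)>1$ for $y\ne0$, together with the existence and location of the unique minimizer $y_*=1.6267\ldots$ of $|S(iy)|$ and the value $|S(iy_*)|=0.686\ldots$ --- and then you stop, conjecturing that the contour deformation behind the stationary-phase method also handles this. It does not: that device is used only for the zero-freeness and the asymptotics, while the imaginary-axis extremal problem is part (iv) of the paper's Lemma and is settled by a separate real-variable argument. The paper writes $|S(iy)|^2=f(y)/g(y)$, forms the iterated derivative ratios $s_1=f'/g'$ and $s_2=f_1'/g_1'$, observes that $s_2$ is a ratio of two degree-$6$ polynomials whose exact monotonicity pattern (increasing--decreasing--increasing) can be determined algebraically, and then climbs back up via the l'Hospital-type monotonicity rules of \cite{pin06} to show that $s_1$ is increasing on $[0,\infty)$ and hence that $|S(iy)|$ is strictly decreasing on $[0,y_*]$ and increasing on $[y_*,\infty)$; the numerical values follow from sign checks of $s'$ at rational points and monotone enclosures. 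Without some such argument your proposal establishes parts (I) and (III) and the reduction of (II) to the boundary, but not the sharp constants, which are the main content of the theorem. (Two small slips: $1/S$ is holomorphic on the interior of $H_+$, not entire; and the strict inequalities of \eqref{eq:ineq} at interior points require noting that $S$ is non-constant.)
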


In view of the maximum modulus principle (see e.g.\ Section~3.4 of \cite{ahlfors79
}) applied to the function $S$ and its reciprocal $1/S$, Theorem~\ref{th:} is an immediate corollary of 

\begin{lemma}\label{lem:}\ 
\begin{enumerate}[(i)] 
		\item \label{conj} For all $z\in H_+$ one has $\vpi(\bar z)=\bar{\vpi(z)}$ and $\G(\bar z)=\bar{\G(z)}$, where, as usual, the bar $\bar{\phantom{x}}$ denotes the complex conjugation. 
		\item \label{G ne 0,lem} The function $\G$ has no zeros on $H_+$. Moreover, $\Re R(z)>0$ and $\Im R(z)$ equals $\Im z$ in sign, for any $z\in H_+$. 
%
		\item \label{lim} Equalities \eqref{eq:lim} hold. 
	\item \label{extr} There is a (necessarily unique) point $y_*\in(0,\infty)$ such that $|S(iy)|$ (strictly) decreases from $S(0)=1$ to $|S(iy_*)|$ as $y$ increases from $0$ to $y_*$, and $|S(iy)|$ (strictly) increases from $|S(iy_*)|$ to $1$ as $y$ increases from $y_*$ to $\infty$. In fact, $y_*=1.6267\ldots$ and $|S(iy_*)|=0.686\ldots$. 
\end{enumerate}
\end{lemma}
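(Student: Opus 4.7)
The plan is to dispatch items~\eqref{conj} and~\eqref{lim} as short computations and invest the real effort in~\eqref{G ne 0,lem} and~\eqref{extr}. Item~\eqref{conj} is immediate because $\vpi(z)=(2\pi)^{-1/2}e^{-z^2/2}$ is entire with real Taylor coefficients at $0$, and $\G(z)=\tfrac12-\int_0^z\vpi(u)\,\dd u$ inherits the same reflection symmetry.

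For~\eqref{G ne 0,lem} I would introduce $F:=1/R=\G/\vpi$. A short calculation from $\vpi'=-z\vpi$ and $\G'=-\vpi$ shows that $F$ is entire and satisfies the linear ODE $F'(z)=zF(z)-1$ with $F(0)=\sqrt{\pi/2}$; in particular, on the imaginary axis, solving the coupled ODEs for $\Re F(iy)$ and $\Im F(iy)$ yields the explicit formula
\[
F(iy)=e^{-y^2/2}\bigl(\sqrt{\pi/2}-i\,D(y)\bigr),\qquad D(y):=\int_0^y e^{t^2/2}\,\dd t,
\]
so that $\Re F(iy)=\sqrt{\pi/2}\,e^{-y^2/2}>0$ and $\Im F(iy)$ has the opposite sign of $y$. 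To propagate these sign properties into the interior of $H_+$, I would use the integral representation $F(z)=\int_0^\infty e^{-tz-t^2/2}\,\dd t$ (valid for $\Re z\ge0$ and obtained by parametrizing $\G(z)=\int_0^\infty\vpi(z+s)\,\dd s$) to show that $F$ is bounded on $H_+$ and tends to $0$ at infinity. The minimum principle for bounded harmonic functions on a half-plane applied to $\Re F$ then gives $\Re F>0$ throughout $H_+$, and the maximum/minimum principle applied to $\Im F$ separately on the upper-right and lower-right quadrants (where $\Im F=0$ on the positive real axis and has a definite sign on the imaginary axis) gives $\sign\Im F(z)=-\sign\Im z$. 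The identity $R=\bar F/|F|^2$ then delivers $\Re R>0$, $\sign\Im R=\sign\Im z$, and in particular $\G\ne0$ on $H_+$.

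Item~\eqref{lim} is a short computation: directly $R(0)=\vpi(0)/\G(0)=\sqrt{2/\pi}$, so $S(0)=1$; and substituting $u=tz$ in the integral representation for $F$ and invoking dominated convergence gives $zF(z)\to1$ as $H_+\ni z\to\infty$, hence $R(z)\sim z$ and $S(z)\to1$.

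The substantive task is~\eqref{extr}. The formulas above yield $|S(iy)|^2=1/h(y)$ with
\[
h(y):=\bigl[\tfrac{\pi}{2}+D(y)^2\bigr]\bigl(y^2+\tfrac{2}{\pi}\bigr)e^{-y^2},
\]
and the classical asymptotic $D(y)=(e^{y^2/2}/y)\bigl(1+y^{-2}+O(y^{-4})\bigr)$ gives $h(0)=\lim_{y\to\infty}h(y)=1$, so any single test point with $h>1$ forces the existence of an interior maximum of $h$. The content of the lemma is the uniqueness of this maximizer. The critical-point equation $h'(y)=0$ rearranges to
\[
\frac{D(y)\,e^{y^2/2}}{\tfrac{\pi}{2}+D(y)^2}=\frac{y\bigl(y^2+\tfrac{2}{\pi}-1\bigr)}{y^2+\tfrac{2}{\pi}};
\]
the right-hand side is negative on $0<y<\sqrt{1-2/\pi}$ while the left is positive, ruling out critical points in that range. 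On $y\ge\sqrt{1-2/\pi}$ I would attempt uniqueness by showing that the ratio of the two sides is strictly monotonic, using the ODE $D'=e^{y^2/2}$ and the explicit expressions for the derivatives of both sides to reduce the problem to a one-parameter inequality in $y$ involving $D(y)$ and $e^{y^2/2}$. This monotonicity analysis is the principal obstacle; once it is in hand the claimed numerical values $y_*=1.6267\ldots$ and $|S(iy_*)|=0.686\ldots$ are extracted by solving the critical equation numerically on a short interval.
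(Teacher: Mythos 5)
Your treatment of items (i)--(iii) is a legitimate alternative route. The paper proves (ii) and (iii) by deforming the contour onto the hyperbola $uv=xy$, obtaining $\G(z)/\vpi(z)=A-iB$ with $A>0$, $B\ge0$ explicit real integrals, and then estimating $A$ and $B$ directly; you instead use the Laplace-type representation $F(z)=\int_0^\infty e^{-tz-t^2/2}\,\dd t$ of the Mills ratio together with the Poisson/maximum principle on the half-plane and on the quadrants. That works (the boundary data $\Re F(iy)=\sqrt{\pi/2}\,e^{-y^2/2}>0$ and $\Im F(iy)=-e^{-y^2/2}D(y)$ are correct, and $F$ is bounded on $H_+$), and is arguably cleaner. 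One caveat on (iii): dominated convergence does not by itself give $zF(z)\to1$ when $x=\Re z$ stays bounded while $|y|\to\infty$ --- there the smallness of $\int_0^\infty te^{-tz-t^2/2}\,\dd t$ comes from oscillation (Riemann--Lebesgue), not from domination; this is exactly the regime the paper handles by a separate argument. That is repairable.

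The genuine gap is in (iv), which is the substantive part of the lemma. You correctly reduce to showing that $h(y)=\bigl(\pi/2+D(y)^2\bigr)\bigl(y^2+2/\pi\bigr)e^{-y^2}$ has a unique critical point in $(0,\infty)$, and correctly rule out critical points on $(0,\sqrt{1-2/\pi}\,]$. But the strategy you propose for the remaining range --- showing that the ratio of the two sides of the critical equation, $L(y):=D(y)e^{y^2/2}/(\pi/2+D(y)^2)$ over $M(y):=y(y^2+2/\pi-1)/(y^2+2/\pi)$, is strictly monotone --- provably cannot succeed. Indeed, $L/M\to+\infty$ as $y\downarrow\sqrt{1-2/\pi}$, while the expansions $L(y)=y-y^{-1}-2y^{-3}+O(y^{-5})$ and $M(y)=y-y^{-1}+(2/\pi)y^{-3}+O(y^{-5})$ give $L-M\sim-(2+2/\pi)y^{-3}<0$, so $L/M\to1$ from below; a function decreasing to its limit stays above that limit, so $L/M$ cannot be monotone on that interval. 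You would therefore still need to prove that $L-M$ changes sign exactly once, and that is precisely where the real work lies. The paper accomplishes it with l'Hospital-type monotonicity rules: writing $s=|S(iy)|^2=f/g$, it forms the derivative ratio $s_1=f'/g'$ and then a second derivative ratio $s_2$, which is a ratio of two degree-6 polynomials whose monotonicity pattern can be determined algebraically, and then propagates monotonicity back to $s_1$ and to $s$. Some device of this kind is required; as written, your proof of (iv) is a plan whose central step is both missing and, in the specific form proposed, false.
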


All necessary proofs are deferred to Section~\ref{proofs}. 

The main idea of the proof is a non-asymptotic version of the so-called stationary-phase method, which latter is described and used for asymptotics e.g.\ in \cite{guillemin-sternberg}. The mentioned version of the method is given by formulas~\eqref{eq:A-iB}--\eqref{eq:A,B}, which provide comparatively easy to analyze integral expressions for the real and imaginary parts of the Mills ratio $\G(z)/\vpi(z)$ for $z$ with $\Re z>0$. 

\begin{remark}\label{rem:}
It also follows from Lemma~\ref{lem:} by the maximum modulus principle (or, slightly more immediately, by the minimum modulus principle) that for each $x\in[0,\infty)$ the minimum $\min\{|S(x+iy)|\colon y\in\R\}$ is attained and is (strictly) increasing in $x\in[0,\infty)$, from $0.686\ldots$ to $1$. 
Of course, instead of the family of vertical straight lines $\big(\{x+iy\colon y\in\R\}\big)_{x\in[0,\infty)}$, one can take here any other appropriate family of curves, e.g.\ the family $\big(\{x+\vpi(y)+iy\colon y\in\R\}\big)_{x\in[0,\infty)}$. 
\end{remark}

In particular, Theorem~\ref{th:} implies that $\max_{x\ge0}S(x)=1$. One can also find $\min_{x\ge0}S(x)$. More specifically, one has the following proposition, complementing Theorem~\ref{th:}. 

\begin{proposition}\label{prop:x>0}
There is a (necessarily unique) point $x_*\in(0,\infty)$ such that $S(x)$ decreases from $S(0)=1$ to $S(x_*)$ as $x$ increases from $0$ to $x_*$, and $S(x)$ increases back to $1$ as $x$ increases from $x_*$ to $\infty$. In fact, $x_*=(\pi-1)\sqrt{2/\pi} = 1.7087\ldots$ and $S(x_*)=0.844\ldots$. 
\end{proposition}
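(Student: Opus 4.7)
The plan is to analyze the sign of $S'(x)$ on $(0,\infty)$ via an auxiliary function. Since $S(x) = \vpi(x)/[(x+\sqrt{2/\pi})\,\G(x)]$, differentiation with $\vpi'(x) = -x\vpi(x)$ and $\G'(x) = -\vpi(x)$ shows that $S'(x)$ has the same sign as
\[
h(x) := (x + \sqrt{2/\pi})\,\vpi(x) - \bigl[x(x + \sqrt{2/\pi}) + 1\bigr]\,\G(x),
\]
and that $S'(x) = 0$ is equivalent to $R(x) = x + 1/(x + \sqrt{2/\pi})$. A direct computation gives $h(0) = 1/\pi - 1/2 < 0$, while the classical asymptotic $R(x) = x + 1/x - 1/x^3 + O(x^{-5})$ yields $h(x) \sim \sqrt{2/\pi}\,\vpi(x)/x^{2} \to 0^+$ as $x\to\infty$; in particular this recovers $\lim_{x\to\infty}S(x) = 1$ in line with \eqref{eq:lim}.

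For uniqueness of the critical point I would differentiate $h$ once more,
\[
h'(x) = 2\vpi(x) - (2x + \sqrt{2/\pi})\,\G(x),
\]
so $h'(x) = 0$ iff $R(x) - x = \tfrac12\sqrt{2/\pi}$. The classical Komatsu-type inequality $R(x) < (x + \sqrt{x^2 + 4})/2$, equivalent to $R'(x) = R(x)(R(x) - x) < 1$ on $[0,\infty)$, forces $x \mapsto R(x) - x$ to be strictly decreasing from $R(0) = \sqrt{2/\pi}$ down to $0$. Hence there is exactly one $x_1 \in (0,\infty)$ with $h'(x_1) = 0$, and $h' > 0$ on $(0,x_1)$, $h' < 0$ on $(x_1,\infty)$. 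Combined with $h(0) < 0$ and $h(x) \to 0^+$, the strictly unimodal $h$ has a unique zero $x_* \in (0, x_1)$, with $h < 0$ on $(0, x_*)$ and $h > 0$ on $(x_*, \infty)$, which gives exactly the monotonicity pattern of $S$ claimed in the proposition.

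The explicit value $x_* = (\pi - 1)\sqrt{2/\pi}$ is convenient because it is equivalent to $x_* + \sqrt{2/\pi} = \sqrt{2\pi}$; multiplying the critical equation $h(x_*) = 0$ by $\sqrt{2\pi}$ then reduces it to the single-variable identity $e^{-x_*^2/2} = (2\pi - 1)\,\G(x_*)$, whereupon $S(x_*) = R(x_*)/\sqrt{2\pi} = 0.844\ldots$. I expect the main obstacle to lie in this last closed-form identification --- pinning $x_*$ down beyond ``the unique root of $h$'' --- since the structural existence-and-uniqueness step above is a direct consequence of the Komatsu-type inequality together with the classical asymptotic expansion of the Mills ratio.
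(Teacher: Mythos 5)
Your qualitative argument is correct, complete, and genuinely different from the paper's. The paper writes $S|_{[0,\infty)}=f/g$ with $f(x)=\varphi(x)/(x+\sqrt{2/\pi})$ and $g=\Psi$, notes that the derivative ratio
\[
\frac{f'}{g'}=\frac{x^2+x\sqrt{2/\pi}+1}{(x+\sqrt{2/\pi})^2}
\]
is a rational function that decreases on $[0,(\pi-1)\sqrt{2/\pi}\,]$ and increases thereafter, and then invokes a derived special l'Hospital-type rule for monotonicity from \cite{pin06}, together with $f(\infty-)=g(\infty-)=0$ and the endpoint values $S(0)=S(\infty-)=1>S(x_*)$, to obtain the decreasing--increasing pattern. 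Your route --- sign of $S'$ equals sign of $h=(x+c)\varphi-[x(x+c)+1]\Psi$ with $c=\sqrt{2/\pi}$, then $h'=2\varphi-(2x+c)\Psi$, then the Komatsu-type bound $R'=R(R-x)<1$ to make $R-x$ strictly decreasing from $c$ to $0$, hence $h$ up--down, hence (from $h(0)<0$ and $h\to0^+$) a single sign change for $h$ --- is self-contained and avoids the external monotonicity machinery entirely. I checked these computations and they are right (the expansion of $R$ should read $x+1/x-2/x^3+O(x^{-5})$, but only the $1/x$ term matters for $h\sim c\,\varphi(x)/x^2$).

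The gap is exactly the one you flagged, and it cannot be closed: the identity you would need, $e^{-x_*^2/2}=(2\pi-1)\Psi(x_*)$ at $x_*=(\pi-1)\sqrt{2/\pi}$, is false. By your own (correct) characterization, the switch point of $S$ is the unique root of $R(x)=x+1/(x+c)$; at $x=(\pi-1)\sqrt{2/\pi}=1.7087\ldots$ one has $R(x)=2.1179\ldots$ versus $x+1/(x+c)=x+1/\sqrt{2\pi}=2.1077\ldots$, so there $h>0$ and $S$ is already increasing. The actual root of $h$ is near $1.419$, where $S\approx 0.8436$. What $(\pi-1)\sqrt{2/\pi}$ actually is, is the minimizer of the paper's derivative ratio $f'/g'$ (the root of $cx+c^2-2=0$). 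The l'Hospital-type rules transfer the decreasing--increasing \emph{pattern} from $f'/g'$ to $f/g$, but not the location of the switch point: $S'$ vanishes where $f/g=f'/g'$, not where $(f'/g')'=0$. So your analysis is not merely incomplete --- it shows that the explicit value of $x_*$ asserted in the proposition (and carried over from the derivative ratio in the paper's proof) is incorrect, while the qualitative existence-and-uniqueness statement stands and is fully established by your argument.
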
 

The Mills ratio of the real argument, including related bounds and monotonicity patterns, has been studied very extensively; see e.g.\ \cite{shenton,baricz08} and further references therein.

Theorem~\ref{th:}, Lemma~\ref{lem:}, Remark~\ref{rem:}, and Proposition~\ref{prop:x>0} are illustrated in Figure~\ref{fig:}. 

\pgfplotsset{width=5.5cm
}

\begin{figure}[htbp]
	\centering
\begin{tikzpicture}
\begin{axis}[footnotesize,
view/h=40,
xlabel={\footnotesize $x$}, ylabel={\footnotesize $y$},  zlabel={\footnotesize $|S(z)|\phantom{nnnn}$}, 
every axis z label/.style=
{at={(zticklabel cs:0.5)},rotate=0,anchor=center},
every axis x label/.style=
{at={(xticklabel cs:0.5,3pt)},rotate=0,anchor=center},
every axis y label/.style=
{at={(yticklabel cs:0.5,-1pt)},rotate=0,anchor=center},
]
\addplot3[surf,mesh/ordering=y varies,mesh/rows=41
]
table {abs.dat};
\end{axis}
\end{tikzpicture}
\\ \ 
\\ 
\begin{tikzpicture}
\begin{axis}[footnotesize,
view/h=40,
xlabel={\footnotesize $x$}, ylabel={\footnotesize $y$},  zlabel={\footnotesize $\Re S(z)\phantom{nnnn}$}, 
every axis z label/.style=
{at={(zticklabel cs:0.5)},rotate=0,anchor=center},
every axis x label/.style=
{at={(xticklabel cs:0.5,3pt)},rotate=0,anchor=center},
every axis y label/.style=
{at={(yticklabel cs:0.5,-1pt)},rotate=0,anchor=center},
]
\addplot3[surf,mesh/ordering=y varies,mesh/rows=41
]
table {re.dat};
\end{axis}
\end{tikzpicture}
\qquad\quad  
\begin{tikzpicture}
\begin{axis}[footnotesize,
view/h=50,
xlabel={\footnotesize $x$}, ylabel={\footnotesize $y$},  zlabel={\footnotesize $\Im S(z)\phantom{n}$}, 
every axis z label/.style=
{at={(zticklabel cs:0.5)},rotate=0,anchor=center},
every axis x label/.style=
{at={(xticklabel cs:0.5,3pt)},rotate=0,anchor=center},
every axis y label/.style=
{at={(yticklabel cs:0.5,-1pt)},rotate=0,anchor=center},
]
\addplot3[surf,mesh/ordering=y varies,mesh/rows=41
]
table {im.dat};
\end{axis}
\end{tikzpicture}
\caption{Graphs of $|S(x+iy)|$ and $\Re S(x+iy)$ for $(x,y)\in[0,8]\times[-8,8]$, and of $\Im S(x+iy)$ for $(x,y)\in[0,16]\times[-8,8]$.}%
	\label{fig:}
\end{figure}
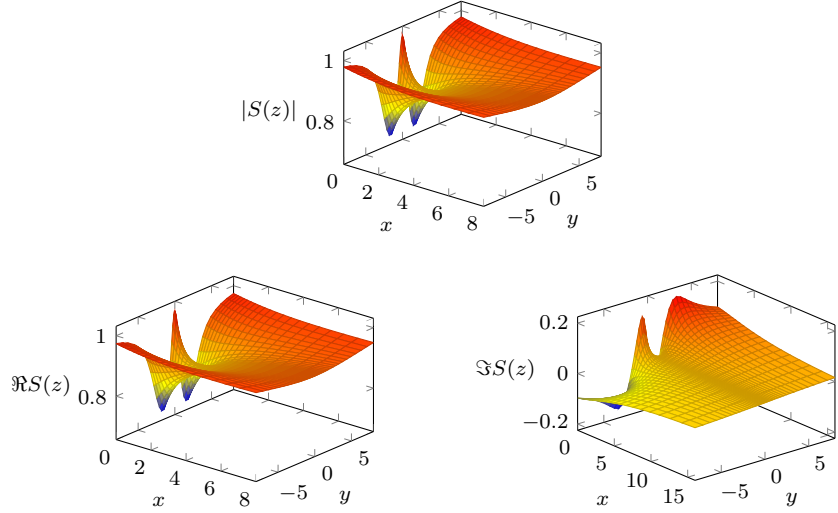



An immediate corollary of Theorem~\ref{th:} is 

\begin{corollary}\label{cor:R<}
For all $z\in H_+\setminus\{0\}$,  
\begin{equation}\label{eq:|R|<}
	|R(z)|<\big|z+\sqrt{2/\pi}\big|, 
\end{equation}
whereas $R(0)=\sqrt{2/\pi}$. 
Moreover, 
\begin{equation}\label{eq:asymp}
	R(z)\sim z \quad\text{as\quad $H_+\ni z\to\infty$. }
\end{equation}
\end{corollary}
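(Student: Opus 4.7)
The plan is to read everything off from Theorem~\ref{th:} directly: the corollary is essentially a translation of the bounds on $|S|$ and the boundary values of $S$ back to $R$ via the definition $R(z)=(z+\sqrt{2/\pi})\,S(z)$.

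First I would handle the strict inequality \eqref{eq:|R|<}. Part~\eqref{ineq} of Theorem~\ref{th:} gives $|S(z)|<1$ for every $z\in H_+\setminus\{0\}$ (the point $z=0$ being the only place where $|S|$ attains the value $1$ on $H_+$, since $|S(\pm iy_*)|=0.686\ldots<1$). Before multiplying through, I need to observe that $z+\sqrt{2/\pi}\neq0$ on $H_+$: indeed $\Re(z+\sqrt{2/\pi})\ge\sqrt{2/\pi}>0$, so division is legal. Multiplying the strict bound $|S(z)|<1$ by $|z+\sqrt{2/\pi}|>0$ then yields $|R(z)|<|z+\sqrt{2/\pi}|$, which is \eqref{eq:|R|<}.

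Next I would dispose of the value at the origin. Direct substitution gives $\vpi(0)=1/\sqrt{2\pi}$ and $\G(0)=\tfrac12$, so $R(0)=\vpi(0)/\G(0)=2/\sqrt{2\pi}=\sqrt{2/\pi}$; this is consistent with $S(0)=1$ as asserted in \eqref{eq:lim}.

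Finally, for the asymptotic \eqref{eq:asymp}, I would use the second equality in \eqref{eq:lim}, namely $\lim_{H_+\ni z\to\infty}S(z)=1$. Since $R(z)=(z+\sqrt{2/\pi})\,S(z)$ and $(z+\sqrt{2/\pi})/z\to1$ as $|z|\to\infty$, one immediately obtains $R(z)/z\to1$, that is $R(z)\sim z$ as $H_+\ni z\to\infty$.

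There is no substantive obstacle: once Theorem~\ref{th:} is in hand, the corollary is a purely formal unpacking of the definition of $S$, plus the elementary observation that $z+\sqrt{2/\pi}$ does not vanish on $H_+$ and is asymptotic to $z$ at infinity.
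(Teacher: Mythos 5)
Your proposal is correct and matches the paper's intent exactly: the paper offers no separate proof of this corollary, declaring it an immediate consequence of Theorem~\ref{th:}, and your unpacking (the strict bound $|S(z)|<1$ off the origin, the direct computation of $R(0)$, and $S(z)\to1$ combined with $(z+\sqrt{2/\pi})/z\to1$) is precisely that deduction. Nothing is missing.
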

\noindent
Here and in the sequel we use the following standard notation: $F\sim G$ meaning that $F/G\to1$; $F\lsim G$ or, equivalently, $G\gsim F$ meaning that $\limsup F/G\le1$; 
$F\ll G$ or, equivalently, $G\gg F$ meaning that $F/G\to0$; 
$F\O G$ or, equivalently, $G\OG F$ 
meaning that $\limsup|F/G|<\infty$, and $F\asymp G$ meaning that $F\O G\O F$. 


In turn, Corollary~\ref{cor:R<} yields the following bound on the $n$th derivative $R^{(n)}$ of the inverse Mills ratio  $R$. 

\begin{corollary}\label{cor:ders}
For any natural $n$ and any $z\in H_+$ with $x:=\Re z>0$, 
\begin{equation}\label{eq:R^{(n)}<}
	\big|R^{(n)}(z)\big|\le R^{(n)}_{\max}(z):=\frac{n!}{x^n}\,\sqrt{\big|z+\sqrt{2/\pi}\big|^2+x^2}. 
\end{equation}
Moreover, for each natural $n$,  
\begin{equation}\label{eq:R^{(n)}<<}
	\big|R^{(n)}(z)-\ii{n=1}\big|\ll\frac{|z|}{x^n}\quad\text{as\quad $z\in H_+$ and $x\to\infty$, }
\end{equation}
where $\ii\cdot$ stands for the indicator function. 
\end{corollary}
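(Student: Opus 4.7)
The plan is to deduce both assertions from Cauchy's integral formula for derivatives, applied on disks lying inside $H_+$, together with the bounds of Corollary~\ref{cor:R<}. The key geometric observation is that for $z\in H_+$ with $x:=\Re z>0$, the closed disk $\{w:|w-z|\le r\}$ is contained in $H_+$ whenever $r\le x$, and on any such disk $R$ is holomorphic by Theorem~\ref{th:}(\ref{G ne 0}) (one may if preferred work with $r<x$ and then let $r\uparrow x$).

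For \eqref{eq:R^{(n)}<} I would take $r=x$ in Cauchy's formula, yielding
\begin{equation*}
|R^{(n)}(z)|\le\frac{n!}{x^n}\cdot\frac1{2\pi}\int_0^{2\pi}|R(z+xe^{i\theta})|\,d\theta.
\end{equation*}
By \eqref{eq:|R|<} one can then replace $|R(w)|$ by $|w+\sqrt{2/\pi}|$ inside the average. The one non-routine step is to bound the resulting first moment by the square root of the second moment via the Cauchy--Schwarz inequality: writing $a:=z+\sqrt{2/\pi}$, expansion of $|a+xe^{i\theta}|^2$ together with the orthogonality $\int_0^{2\pi}e^{\pm i\theta}\,d\theta=0$ gives
\begin{equation*}
\frac1{2\pi}\int_0^{2\pi}|a+xe^{i\theta}|^2\,d\theta=|a|^2+x^2,
\end{equation*}
which is precisely $|z+\sqrt{2/\pi}|^2+x^2$. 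The cruder supremum of $|w+\sqrt{2/\pi}|$ on the boundary circle would only yield the weaker $|z+\sqrt{2/\pi}|+x$, so this mean-square step is the single point that has to be noticed.

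For \eqref{eq:R^{(n)}<<} I would set $g:=R-\id$, so that $R^{(n)}-\ii{n=1}=g^{(n)}$ for every natural $n$. The asymptotic \eqref{eq:asymp} says $g(w)=o(|w|)$ as $H_+\ni w\to\infty$. I then apply Cauchy's inequality on the circle of radius $x/2$ around $z$, which still lies in $H_+$ and consists of points $w$ with $|w|\ge x/2\to\infty$ as $x\to\infty$; using $x\le|z|$, one gets $|g(w)|\le\vp|w|\le\vp(|z|+x/2)\le\tfrac32\vp|z|$ for any prescribed $\vp>0$ once $x$ is large enough, whence
\begin{equation*}
|g^{(n)}(z)|\le\frac{n!}{(x/2)^n}\cdot\tfrac32\vp|z|.
\end{equation*}
Since $\vp$ is arbitrary, this is $o(|z|/x^n)$ as $x\to\infty$, which is \eqref{eq:R^{(n)}<<}. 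Overall there is no substantive obstacle in the argument; the only mildly delicate point is the Cauchy--Schwarz computation in the first part that is responsible for the exact form of the constant.
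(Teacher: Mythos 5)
Your proof is correct and follows essentially the same route as the paper: Cauchy's integral formula on circles of radius $x$ and $x/2$ centered at $z$, combined with the bounds \eqref{eq:|R|<} and \eqref{eq:asymp} of Corollary~\ref{cor:R<}. The only (harmless) differences are that you bound the average of $|z+\sqrt{2/\pi}+xe^{i\theta}|$ by Cauchy--Schwarz where the paper argues via monotonicity of the elliptic integral $J(a,b)$ in $b$ --- both yield the same constant $\sqrt{|z+\sqrt{2/\pi}|^2+x^2}$ --- and that for \eqref{eq:R^{(n)}<<} you subtract $\id$ before applying the Cauchy estimate rather than expanding $R$ under the integral and using orthogonality of $e^{-int}$.
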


In applications of Corollary~\ref{cor:ders} to the calculation of sums of the form $s_{x_0,\de,N}:=\sum_{i=0}^{N-1}R(x_0+i\de)$ for natural $N$ and positive $x_0$ and $\de$ (see e.g.\ \cite{euler-maclaurin-alt}), of special interest is the case when $x>>n>>1$, which makes the bound $R^{(n)}_{\max}(x)$ small (here $x$ is real, as before). In such a case, the bound $R^{(n)}_{\max}(x)$ is optimal at least in the logarithmic sense -- which is the appropriate sense as far as the desired number of digits in the calculation of the sums $s_{x_0,\de,N}$ is concerned. Indeed, let us compare the bound $R^{(n)}_{\max}(x)$ on the $n$th derivative of the function $R$ with the $n$th derivative of the function $f(x)=x+1/x$, which is asymptotic to $R(x)$ as $x\to\infty$. We see that $\log R^{(n)}_{\max}(x)\sim-n\log\frac xn\sim\log|f^{(n)}(x)|$ if $x>>n>>1$. 

\section{Proofs}\label{proofs}

\begin{proof}[Proof of Lemma~\ref{lem:}]\ 

\textbf{\eqref{conj}} The conjugation symmetry property of the function $\vpi$ is obvious. That of $\G$ follows because 
$\bar{\G(z)}=\int_z^\infty\bar{\vpi(w)}\dd\bar w=\int_{\bar z}^\infty\vpi(w)\dd w=\G(\bar z)$ for all $z\in\C$.  
for all real $x$. 
 
\textbf{\eqref{G ne 0,lem}} Take any $z=x+iy\in H_+$ with $x:=\Re z$ and $y:=\Im z$. 
By part~\eqref{conj} of Lemma~\ref{lem:}, without loss of generality $y\ge0$. 
If $x=0$, then $\G(z)=\G(iy)=-\int_0^y\vpi(iv)\,i\dd v+\G(0)
=-i\int_0^y\frac1{\sqrt{2\pi}}\,e^{v^2/2}\dd v+\frac12$, 
so that $\Re\G(z)>0$ and $\Im\G(z)$ equals $-\Im z$ in sign. So, since $\vpi(iy)>0$ for real $y$, the statements made in part~\eqref{G ne 0,lem} follow, in the case $x=0$. 
Suppose now that $x>0$. Then, integrating from $z=x+iy$ to $\infty+0i$ along a curve with the image set $C:=\{w\in H_+\colon uv=xy, u\ge x\}$, where $u:=\Re w$ and $v:=\Im w$, one has 
\begin{equation}\label{eq:A-iB}
\frac{\G(z)}{\vpi(z)}=A-iB, 
\end{equation}
where 
\begin{equation}\label{eq:A,B}
	A:=A(z):=\int_x^\infty e^{a(u)-a(x)}\dd u\quad\text{and}\quad 
	B:=B(z):=\int_0^y e^{a(y)-a(v)}\dd v, 
\end{equation}
where 
\begin{equation}\label{eq:a}
	a(u):=a_{xy}(u):=\frac{x^2y^2}{2u^2}-\frac{u^2}2. 
\end{equation}
Now the statements made in part~\eqref{G ne 0,lem} follow from \eqref{eq:A-iB}, because $A(z)>0$ and $B(z)\ge0$ (for $y\ge0$), with $B(z)=0$ only if $\Im z=0$. 


\textbf{\eqref{lim}} The first equality in \eqref{eq:lim} is trivial. Let us prove the second equality there. Let $H_+\ni z=x+iy\to\infty$, where $x:=\Re z$ and $y:=\Im z$; that is, $x\ge0$, $y\in\R$, and $x^2+y^2\to\infty$. In view of the continuity of the function $S$ and its conjugation symmetry property, 
without loss of generality $x>0$ and $y>0$. 


Let $r$ denote the Mills ratio, so that $r=1/R=\G/\vpi$. It is well known and easy to prove using l'Hospital's rule that 
\begin{equation}\label{eq:r sim 1/x}
	r(x)\sim1/x\quad\text{as }x\to\infty.  
\end{equation}
Similarly, for the rescaled version $\tr$ of the Dawson function defined by the formula 
\begin{equation}
	\tr(y):=e^{-y^2/2}\int_0^y e^{u^2/2}\dd u
\end{equation}
for $y>0$, one has 
\begin{equation}\label{eq:tr sim 1/y}
	\tr(y)\sim1/y\quad\text{as }y\to\infty.  
\end{equation}

Recalling \eqref{eq:a}, note that 
$a'(u)=-x^2y^2/u^3-u\ge-(u+y^2/x)$ for $u\ge x$, whence $a(u)-a(x)\ge\frac12\,(x+y^2/x)^2-\frac12\,(u+y^2/x)^2$ and, by \eqref{eq:A,B}, 
\begin{equation}\label{eq:Alo}
\begin{aligned}
	A\ge
	&\int_x^\infty\exp\big\{\tfrac12\,(x+y^2/x)^2-\tfrac12\,(u+y^2/x)^2\big\}\dd u \\ =&\int_{x+y^2/x}^\infty\exp\big\{\tfrac12\,(x+y^2/x)^2-\tfrac12\,s^2\big\}\dd s \\ 	
	=&r(x+y^2/x)\sim\frac x{x^2+y^2}. 
\end{aligned}	
\end{equation}
Here we used the observation that $x+y^2/x=(x^2+y^2)/x\ge\sqrt{x^2+y^2}\to\infty$ and \eqref{eq:r sim 1/x}. 

Similarly, 
$-a'(v)=x^2y^2/v^3+v\ge v+x^2/y$ for $v\in(0,y]$, whence $a(y)-a(v)\le\frac12\,(v+x^2/y)^2-\frac12\,(y+x^2/y)^2$ and, by \eqref{eq:A,B}, 
\begin{equation}\label{eq:Bhi}
\begin{aligned}
	B\le
	&\int_0^y\exp\big\{\tfrac12\,(v+x^2/y)^2-\tfrac12\,(x+x^2/y)^2\big\}\dd v \\ 
	\le&\int_0^{y+x^2/y}\exp\big\{\tfrac12\,t^2-\tfrac12\,(y+x^2/y)^2\big\}\dd t \\ 
	=&\tr(y+x^2/y)\sim\frac y{x^2+y^2},  
\end{aligned}	
\end{equation}
in view of \eqref{eq:tr sim 1/y}. 


Next, fix any $c\in(0,1)$. Then for all $v\in[cy,y]$ one has $-a'(v)=x^2y^2/v^3+v\le v+x^2/(c^3y)$, whence $a(y)-a(v)\ge\frac12\,\big(v+x^2/(c^3y)\big)^2-\frac12\,\big(y+x^2/(c^3y)\big)^2$. So, 
\begin{align*}
	B&\ge
	\int_{cy}^y\exp\Big\{\frac12\,\Big(v+\frac{x^2}{c^3y}\Big)^2-\frac12\,\Big(y+\frac{x^2}{c^3y}\Big)^2\Big\}\dd v \\  	
=&\tr\Big(y+\frac{x^2}{c^3y}\Big)
	-\exp\Big\{-\frac12\Big(y+\frac{x^2}{c^3y}\Big)^2\Big\}
	\exp\Big\{\frac12\Big(cy+\frac{x^2}{c^3y}\Big)^2\Big\}\tr\Big(cy+\frac{x^2}{c^3y}\Big) \\ 
=&\tr\Big(y+\frac{x^2}{c^3y}\Big)
	-\exp\Big\{-\frac{1-c^2}2\,y^2-\frac{1-c}{c^3}\,x^2\Big\}\tr\Big(cy+\frac{x^2}{c^3y}\Big) \\ 
	\sim&\frac{c^3y}{x^2+c^3y^2};   
\end{align*}
here we again used \eqref{eq:tr sim 1/y} and the condition $x^2+y^2\to\infty$. 
Letting now $c\uparrow1$ and recalling \eqref{eq:Bhi}, we have 
\begin{equation}\label{eq:B sim}
	B\sim\frac y{x^2+y^2}\quad\text{as }x>0,\ y>0,\ x^2+y^2\to\infty. 
\end{equation}

Further, fix any real $k>1$. Then for all $u\in[x,kx]$ one has $a'(u)=-x^2y^2/u^3-u\le-\big(u+y^2/(k^3x)\big)$, whence $a(u)-a(x)\le-\frac12\,\big(u+y^2/(k^3x)\big)^2+\frac12\,\big(x+y^2/(k^3x)\big)^2$. So, 
\begin{align*}
	A&\le
	A_1(k)+A_2(k), 
\end{align*}
where 
\begin{align*}	
A_1(k):=	&
	\int_x^{kx}\exp\Big\{-\frac12\,\Big(u+\frac{y^2}{k^3x}\Big)^2+\frac12\,\Big(x+\frac{y^2}{k^3x}\Big)^2\Big\}\dd u \\ 
\le&\int_x^\infty\exp\Big\{-\frac12\,\Big(u+\frac{y^2}{k^3x}\Big)^2+\frac12\,\Big(x+\frac{y^2}{k^3x}\Big)^2\Big\}\dd u \\ 
=&r\Big(x+\frac{y^2}{k^3x}\Big)
	\sim\frac{k^3x}{k^3x^2+y^2},    
\end{align*}
in view of \eqref{eq:r sim 1/x} and the condition $x^2+y^2\to\infty$, and 
\begin{align}	
A_2(k):=	
	\int_{kx}^\infty e^{a(u)-a(x)}\dd u 
	&=\int_{kx}^\infty \exp\Big\{\frac{x^2y^2}{2u^2}-\frac{u^2}2-a(x)\Big\}\dd u \notag \\  
&\le \exp\Big\{\frac{y^2}{2k^2}-a(x)\Big\} \int_{kx}^\infty e^{-u^2/2}\dd u \notag \\  
&<\exp\Big\{\frac{y^2}{2k^2}-a(x)\Big\} e^{-k^2x^2/2}\frac1{kx} \notag \\  
&<\exp\Big\{-\frac{k^2-1}{2k^2}\,(x^2+y^2)\Big\}\frac1{kx}  \notag \\  
&\ll \frac x{x^2+y^2}  \quad\text{if $x^2(x^2+y^2)\OG1$,}\label{eq:Ahii2}   
\end{align}	
because then $\frac1{kx}\O x(x^2+y^2)$, while $\exp\big\{\kern-2pt-\frac{k^2-1}{2k^2}\,(x^2+y^2)\big\}
\ll 1/(x^2+y^2)^2$. \hfill

Letting now $k\downarrow1$ and recalling \eqref{eq:Alo}, we have 
\begin{equation}\label{eq:A sim}
	A\sim\frac x{x^2+y^2}\quad\text{as }x>0,\ y>0,\ x^2+y^2\to\infty,\ x^2(x^2+y^2)\OG1. 
\end{equation}

In view of \eqref{eq:R}, \eqref{eq:A-iB}, \eqref{eq:B sim}, and \eqref{eq:A sim}, 
\begin{equation}\label{eq:R sim}
	R(z)=\frac1{A-iB}\sim\frac{x^2+y^2}{x-iy}=z 
\end{equation} 
as $x>0$, $y>0$,\ $x^2+y^2\to\infty$,\ and $x^2(x^2+y^2)\OG1$. 

Suppose now the condition $x^2(x^2+y^2)\OG1$ does not hold, so that without loss of generality 
$x^2(x^2+y^2)\to0$ (while still $x>0,\ y>0,\ x^2+y^2\to\infty$). Then $x\to0$, $xy\to0$, $y
\to\infty$,  $z\sim iy$, and $\vpi(x+iy)=\vpi(iy)\exp\{-x^2/2-ixy\}\sim\vpi(iy)\to\infty$. 
So,  
\begin{equation}\label{eq:I_1,I_2}
\G(z)=-I_1-I_2+\G(0)=-I_1-I_2+1/2, 	
\end{equation}
where 
\begin{equation}
	I_1:=\int_0^x\vpi(u+iy)\dd u\quad\text{and}\quad
	I_2:=i\int_0^y\vpi(iv)\dd v. 
\end{equation}
At that, 
\begin{equation}
	|I_1|\le x\vpi(iy)\ll\frac{\vpi(iy)}{\sqrt{x^2+y^2}}\sim|\vpi(z)/z|
\end{equation}
and, by \eqref{eq:tr sim 1/y},  
\begin{equation}
	I_2\sim i\vpi(iy)/y\sim -\vpi(z)/z. 
\end{equation}
It also follows that $|I_2|\to\infty$. 

So, in view of \eqref{eq:R} and \eqref{eq:I_1,I_2}, asymptotic equivalence \eqref{eq:R sim} holds whenever $x>0$, $y>0$, and $x^2+y^2\to\infty$.  
This completes the proof of part~\eqref{lim} of Lemma~\ref{lem:}.  

\textbf{\eqref{extr}} 
Let 
\begin{equation}\label{eq:s}
s(y):=|S(iy)|^2=\frac{f(y)}{g(y)},
\end{equation}
where 
\begin{equation}\label{eq:f,g}
f(y):=\frac{\vpi(iy)^2}{y^2 + 2/\pi},\quad 
g(y):=\frac14+E(y)^2,\quad E(y):=\int_0^y\vpi(iv)\dd v. 
\end{equation}
Here and in the rest of the proof of Lemma~\ref{lem:}, $y$ stands for an arbitrary nonnegative real number. 
Consider first two ``derivative ratios'' for the ratio $s=f/g$: 
\begin{equation}
	s_1:=\frac{f'}{g'}=
	\frac{f_1}{g_1}\quad\text{and}\quad s_2:=\frac{f'_1}{g'_1}, 
\end{equation}
where $g_1:=E$ and $f_1:=s_1 E$. Then $s_2$ is a rational function, that is, the ratio of two polynomials (each of degree $6$). So, it is straightforward to find that, for some algebraic numbers $y_{21}$ and $y_{22}$ such that $0<y_{21}<y_{22}$, the function $s_2$ is (strictly) increasing on the interval $[0,y_{21}]$, decreasing on $[y_{21},y_{22}]$, and increasing on $[y_{22},\infty)$; in fact, $y_{21}=0.685\ldots$ and $y_{22}=1.407\ldots$.  

Using the l'Hospital rule for limits, one easily finds that $\lim_{y\downarrow0}s'_1(y)/y=\frac16\, (2 - 4 \pi + 3 \pi^2)>0$. So, by the general l'Hospital-type rule for monotonicity, given e.g.\ by line~1 of Table~1.1 in \cite{pin06}, 
the function $s_1$ is increasing on the interval $[0,y_{21}]$. Next, 
$s'_1(y_{22})=0.054\ldots>0$; so, by lines~2 and 1 of 
Table~1.1 in \cite{pin06}, $s_1$ is increasing on the intervals $[y_{21},y_{22}]$ and $[y_{22},\infty)$ as well. Thus, the first ``derivative ratio'' $s_1$ for the ratio $s$ is increasing on the entire interval $[0,\infty)$. 

The values of the function $s$ at the points $0$, $1$, and $3$ are $1$, $0.553\ldots$, and $0.670\ldots$, respectively, so that $s(0)>s(1)<s(3)$. Using again line~1 of 
Table~1.1 in \cite{pin06}, we conclude that $s$ is decreasing-increasing on $[0,\infty)$; that is, there is a uniquely determined number $y_*\in(0,\infty)$ such that $s$ decreases on $[0,y_*]$ and increases on $[y_*,\infty)$. 
In other words, $|S(iy)|=\sqrt{s(y)}$ decreases in $y\in[0,y_*]$ and increases in $y\in[y_*,\infty)$. 
Let $y_{01}:=16267/10000$ and $y_{02} := 16268/10000$. Then $s'(y_{01})<0<s'(y_{02})$, whence  $0<y_{01}<y_*<y_{02}$ and $y_*=1.6267\ldots$. 

So, in view of \eqref{eq:s} and \eqref{eq:f,g}, 
\begin{equation}
	\min_{y\ge0}|S(iy)|=|S(iy_*)|>\frac{\vpi(iy_{01})}{\sqrt{(y_{02}^2+2/\pi)\big(1/4+E(y_{02})^2\big)}}=0.6861\ldots; 
\end{equation}
here we used the obvious fact that the expressions $\vpi(iy)$, $y^2+2/\pi$, and $1/4+E(y)^2$ are each increasing in $y\ge0$. 
On the other hand, $\min_{y\ge0}|S(iy)|=|S(iy_*)|\le|S(iy_{02})|=0.6862\ldots$. Thus, $|S(iy_*)|=0.686\ldots$. 

This concludes the proof of part~\eqref{extr} of Lemma~\ref{lem:} and thereby that of the entire lemma. 
\end{proof}

\begin{proof}[Proof of Proposition~\ref{prop:x>0}]
This proof is similar to, and even significantly simpler than, the proof of part~\eqref{extr} of Lemma~\ref{lem:}. Indeed, let here $s$ be the restriction of the function $S$ to $[0,\infty)$, so that 
$s=f/g$, where $f(x):=\vpi(x)/(x + \sqrt{2/\pi})$ and $g(x):=\Psi(x)$ for $x\ge0$. Then the ``derivative ratio'' $f'/g'$ is a (rather simple) rational function, 
which decreases on the interval $[0,x_*]$ \big(with $x_*=(\pi-1)\sqrt{2/\pi} = 1.7087\ldots$, as in the statement of Proposition~\ref{prop:x>0}\big) and increases on the interval $[x_*,\infty)$. Also, clearly $f(\infty-)=g(\infty-)=0$. Moreover, $S(x_*)=0.844\ldots<1=S(0)=S(\infty-)$. 
Now Proposition~\ref{prop:x>0} follows immediately by the derived special l'Hospital-type rule for monotonicity given in the last line of Table 4.1 in \cite{pin06}.   
\end{proof}

\begin{proof}[Proof of Corollary~\ref{cor:ders}]\ 
Take indeed any natural $n$ and any $z\in H_+$ with $x=\Re z>0$. For any real $\vp>0$, let $C_{z;\vp}$ denote the circle of radius $\vp$ centered at the point $z$, traced out counterclockwise. By the Cauchy integral formula, 
\begin{equation}\label{eq:R^{(n)}=}
	R^{(n)}(z)=\frac{n!}{2\pi i}\,\int_{C_{z;x}}\frac{R(\zeta)\dd \zeta}{(\zeta-z)^{n+1}}
	=\frac{n!}{2\pi x^n}\,\int_0^{2\pi}R(z+x e^{it})e^{-int}\dd t. 
\end{equation}
So, by \eqref{eq:|R|<}, 
\begin{multline}\label{eq:=J}
	\frac{2\pi x^n}{n!}\,|R^{(n)}(z)|\le\int_0^{2\pi}|z+\sqrt{2/\pi}+x e^{it}|\dd t \\ 
	=\int_0^{2\pi}\sqrt{a+b\cos(t-\theta)}\dd t=\int_0^{2\pi}\sqrt{a+b\cos t}\dd t=:J(a,b), 
\end{multline}
where 
\begin{equation}\label{eq:a,b,th}
	a:=\big|z+\sqrt{2/\pi}\big|^2+x^2,\quad b:=2x\big|z+\sqrt{2/\pi}\big|,\quad \theta:=\arctan\frac y{x+\sqrt{2/\pi}}. 
\end{equation}
The integral $J(a,b)$ is an elliptic one. It admits a simple upper bound, $J(a,0)=2\pi\sqrt a$, which is rather accurate \big(not exceeding $\frac{10}9\,J(a,b)$\big), for any $a$ and $b$ such that $a>0$ and $b\in[0,a]$. This follows because $J(a,b)$ is obviously concave in $b\in[0,a]$, with the partial derivative in $b$ at $b=0$ equal $0$, so that $J(a,b)$ is nonincreasing in $b\in[0,a]$, from $J(a,0)=2\pi\sqrt a$ to $J(a,a)=4\sqrt{2a}>\frac9{10}\,J(a,0)$. 
%
Now \eqref{eq:R^{(n)}<} follows immediately from \eqref{eq:=J} and \eqref{eq:a,b,th}.  


Clearly, the identities in \eqref{eq:R^{(n)}=} hold with $x/2$ in place of $x$. Let now $t\in[0,2\pi]$, $z\in H_+$, and $x=\Re z\to\infty$. 
Then $2|z|\ge\big|z+\frac x2\,e^{it}\big|\ge|z|/2\to\infty$.
So, in view of \eqref{eq:asymp}, 
\begin{multline}
	\frac{2\pi}{n!}\,\Big(\frac x2\Big)^nR^{(n)}(z)
	=\int_0^{2\pi}R\Big(z+\frac x2\,e^{it}\Big)e^{-int}\dd t  
	=\int_0^{2\pi}\Big(z+\frac x2\,e^{it}+o(|z|)\Big)e^{-int}\dd t \\ 
	=2\pi\,\frac x2\,\ii{n=1}+\int_0^{2\pi}o(|z|)e^{-int}\dd t
	=2\pi\,\frac x2\,\ii{n=1}+o(|z|),  
\end{multline}
so that \eqref{eq:R^{(n)}<<} follows as well. 
\end{proof}

\bibliographystyle{elsarticle-num} 

\bibliography{C:/Users/ipinelis/Dropbox/mtu/bib_files/citations12.13.12}


\end{document}